\DeclareFontFamily{T1}{wncyr}{}
\DeclareFontShape{T1}{wncyr}{m}{n}{%
  <5><6><7><8><9>gen*wncyr%
  <10><10.95><12><14.4><17.28><20.74><24.88>wncyr10}{}
\theoremstyle{plain}
\newtheorem{thm}{Théorème}[section]
\newtheorem{lem}[thm]{Lemme}
\newtheorem{coro}[thm]{Corollaire}
\newtheorem*{thma}{Théorème}
\theoremstyle{definition}
\newtheorem{exm}[thm]{Exemple}
\theoremstyle{remark}
\newtheorem{rem}[thm]{Remarque}
\newcommand{\on}{\operatorname}
\newcommand{\mc}{\mathcal}
\newcommand{\mb}{\mathbf}
\newcommand{\mf}{\mathfrak}
\newcommand{\Z}{\mathbb{Z}}
\newcommand{\R}{\mathbb{R}}
\newcommand{\C}{\mathbb{C}}
\newcommand{\Q}{\mathbb{Q}}
\newcommand{\F}{\mathbb{F}}
\newcommand{\pp}{\mf p}
\newcommand{\m}{\mc{M}}
\newcommand{\ol}{\overline}
\newcommand{\mx}{\mbox}
\newcommand{\geqs}{\geqslant}
\newcommand{\leqs}{\leqslant}
\newcommand{\w}{\wedge}
\newcommand{\sm}{\setminus}
\newcommand{\dd}{\mathop{}\mathopen{}\mathrm{d}}
\newcommand{\lra}{\longrightarrow}
\newcommand{\p}{\mathbb{P}}
\newcommand{\Sp}{\on{Spec}}
\newcommand{\Spec}{\Sp}
\newcommand{\dme}[1][]{\mc{DM}_{gm}^{eff}}
\newcommand{\dm}[1][]{\mc{DM}_{gm}}
\newcounter{notemargin}
\tikzset{labf/.style={mathsc,yshift=-\eca}}
\tikzset{labfs/.style={mathss,yshift=-\eca}}
\tikzset{labr/.style={mathsc,yshift=\eca}}
\tikzset{labrs/.style={mathss,yshift=\eca}}
\tikzset{math mode/.style = {execute at begin node=$, execute at end node=$}}
\tikzset{mathscript mode/.style =%
 {execute at begin node=$\scriptstyle , execute at end node=$}}
\tikzset{math/.style = {execute at begin node=$, execute at end node=$}}
\tikzset{mathsc/.style =%
 {execute at begin node=$\scriptstyle , execute at end node=$}}
\tikzset{mathss/.style =%
 {execute at begin node=$\scriptscriptstyle , execute at end node=$}}
\tikzset{root/.style={draw,circle,inner sep=1pt,execute at begin node=$\bullet,
    execute at end node=$}}
\tikzset{roottest/.style={draw,circle,inner sep=#1pt}}
\tikzset{roots/.style={draw,circle,inner sep=2pt}}
\tikzset{bull/.style={fill,circle,minimum size=2pt,inner sep=0pt}}
\tikzset{leaf/.style={minimum size=2pt,inner sep=1pt}}
\tikzset{leafb/.style={minimum size=0pt,inner sep=0pt}}
\tikzset{intvertex/.style={mathsc,fill,circle,minimum size=0.6ex, inner sep=0pt}}
\tikzset{Reda/.style={-,double distance=0.3ex, draw=black}}
\tikzset{N/.style={-,thin,draw=black}}
\tikzset{Nd/.style={-,dotted, thin,draw=black}}
\title
{\'Equations fonctionnelles du dilogarithme}
\author{Ismael Soud{\`e}res}
\address{Universität Osnabrück \\Institut für Mathematik
\\
Albrechtstr. 28a \\
DE-49076 Osnabrück \\
Germany \\
ismael.souderes@uni-osnabrueck.de}
\date{\today}
\begin{document}
\thanks{}
\begin{abstract}
Cet article démontre une ``nouvelle'' famille d'équations fonctionnelles
\eqref{eq:Eqn} ($n\geqs 4$) satisfaites par le dilogarithme de Rogers. Ces
équations fonctionnelles reflètent la combinatoire des coordonnées diédrales des
espaces de modules de courbes de genres $0$, $\m_{0,n}$. Pour $n=4$, on retrouve
la relation de dualité et, pour $n=5$, la relation à $5$ termes du
dilogarithme. Dans une seconde partie, on démontre que la famille \eqref{eq:Eqn}
se réduit à la relation à $5$ termes. C'est, à la connaissance de l'auteur, la
première fois qu'une famille infinie d'équations fonctionnelles du dilogarithme
ayant un nombre croissant de variables ($n-3$ pour \eqref{eq:Eqn}) se réduit à
la relation à $5$ termes.

La réduction de cette famille d'équations à la relation de $5$-cycle explique
les guillemets de la première phrase.
 \end{abstract}

\begin{altabstract}
This paper proves a ``new'' family of functional equations \eqref{eq:Eqn} for
Rogers dilogarithm. These equations rely on the combinatorics of dihedral coordinates
on moduli spaces of curves of genus $0$, $\m_{0,n}$. For $n=4$ we find back the
duality relation while $n=5$ gives back the $5$ terms relation. It is then
proved that the whole family reduces to the $5$ terms relation. In the author's
knownledge, it is the first time that an infinite family of functional equations
for the dilogarithm with an increasing number of variables ($n-3$ for
\eqref{eq:Eqn}) is reduced to the $5$ terms relation.

This reduction explains the quotation marks around ``new'' at the beginning of
this abstract.
 \end{altabstract}
\thanks{La première partie de ce travail a été essentiellement effectuée pour un
  exposé lors de la conférence ``Numbers and Physics'' ; conférence d'ouverture
  du trimestre ``Multiple Zeta Values, Multiple Polylogarithms, and Quantum
  Field Theory'', ICMAT, Madrid, sept.-dec. 2014. L'auteur remercie les
  organisateurs pour leur invitation sans laquelle ce travail n'aurait pas
  commencé. Enfin, je tiens à remercier particulièrement H. Gangl pour son écoute, ses
  explications et ses encouragements.}
\maketitle
\tableofcontents
\section{Introduction}
La fonction dilogarithme $Li_2$ est une fonction classique de l'analyse connue
au moins depuis Euler. Plus récemment les travaux de D. Zagier
\cite{ZagierRemLi2,ZagierLi2GeomNum,ZagierLi2}  ont permis de mieux comprendre
comment la fonction $Li_2$ est au carrefour de la théorie des nombres et de la
géométrie moderne. Ces dernières années l'étude des systèmes $Y$ et  des algèbres
amassées a mis au jour de ``nouvelles équations fonctionnelles'' satisfaites
par la fonction $Li_2$ (voir par exemple \cite{ChapoFIRDCYS, KellerCTQDI,IIKKN1}.
Dans cet article nous prouvons une famille d'équations fonctionnelles
satisfaites par $Li_2$ en nous appuyant sur la géométrie des espaces de modules
de courbes en genre $0$.

Pour $z \in \C$ satisfaisant
$|z|<1$, la fonction dilogarithme $Li_2$ est définie  par 
\[
Li_2(z)=\sum_{k=1}^{\infty} \frac{z^k}{k^2}.
\] 
Elle généralise la fonction 
\[
Li_1(z)=-\log(1-z)=\sum_{k=1}^{\infty} \frac{z^k}{k} \qquad \mbox{pour } |z|<1
\] 
et satisfait l'équation différentielle
\[
\frac{d}{dz}Li_2(z)=-\frac{1}{z}\log(1-z)=\frac 1 z Li_1(z)
\]
qui induit, via une représentation intégrale, le prolongement analytique de
$Li_2$ sur $\C\sm [1;\infty [$. 

Le dilogarithme de Rogers est défini sur l'intervalle $]0,1[$ par 
\[
L(x)=Li_2(x)+\frac 1 2 \log(x)\log(1-x)
\]
étendu par $L(0)=0$ et $L(1)=\frac{\pi^2}{6}$. 
Le dilogarithme satisfait nombre d'équations fonctionnelles (voir par exemple
\cite{ZagierLi2}). Parmi les 
plus familières, on trouve la relation de dualité
\begin{equation}\label{eq:reflex}
L(x)+L(1-x)=L(1)
\end{equation} 
et la relation à $5$-termes
\begin{equation}\label{eq:5term}
L(x)+L(\frac{1-x}{1-xy})+L(\frac{1-y}{1-xy})+L(y)+L(1-xy)=3L(1).
\end{equation} 
Les deux équations ci-dessus sont valables a priori lorsque tous les arguments
sont compris entre $0$ et $1$. Pour une extension à $\R$ ou $\p^1(\R)$ (et à
valeur dans $\R/\frac{\pi^2}{2}\Z$, on consultera
\cite[Chap. II, section A ]{ZagierLi2}. 

Les symétries de ces deux relations correspondent aux symétries des espaces de
modules de courbes en genre $0$ : $\m_{0,4}$ et $\m_{0,5}$. On regardera donc ces
deux relations sur la ``cellule standard'' de $\m_{0,4}(\R)$ et $\m_{0,5}(\R)$
respectivement (\cite[section 2.7]{BrownMZVPMS}). F. Brown dans \cite{BrownMZVPMS} a introduit les
coordonnées diédrales $u_{i,j}=u_{j,i}$ sur les espaces de modules $\m_{0,n}$. Ces
coordonnées sont associées aux cordes strictes d'un polygone dont les sommets sont
numérotés par les éléments  de $\{1, \ldots, n\}$. Il est à noter que les indices
$i$ et $j$ font référence aux numéros des sommets.
L'ensemble des ``numérotations'' possibles des sommets, modulo l'action du groupe diédral
$D_{2n}$, définit un recouvrement affine de l'espace $\m_{0,n}$ (\cite[Section
2]{BrownMZVPMS}. La cellule standard est alors définie par le choix de la
numérotation cyclique des sommets de $1$ à $n$, c'est la structure diédrale
$\delta$ standard, et la condition $u_{i,j}>0$. On se place dans le cas de la structure
diédrale standard. On a en particulier :
\[
i,j \in \{1,\ldots , n\} \qquad \mx{ avec } i\neq j-1,j,j+1 \mod n.
\]
Ces coordonnées ne sont pas indépendantes et F. Brown a montré en particulier
que pour $i$ fixé, toute coordonnée $u_{i,j}$, comme toute fonction sur $\m_{0,n}$ s'écrit comme
une fonction rationnelle en les $u_{i,j}$ l'indice $j$ variant de $i+1$ à $i+n-1$
modulo $n$ (Corollaire 2.23 de \cite{BrownMZVPMS}). On montre ici
\begin{thma}[Theorème \ref{thm:eqfctn}]
Soit $n\geqs 4$ et $\chi_n$ l'ensemble des $n(n-3)/2$ paires $\{i,j\}$ avec
$1\leqs i,j\leqs n$ tel que $i$, $j$, $i+1$ et $j+1$ soient distincts modulo
$n$. Lorsque $u_{i,j}\leqs 0$ pour tout $i$ et $j$, on a 
\begin{equation}\label{eqintro:Eqn}
\sum_{\{i,j\} \in \chi_n} L(u_{i,j})=\frac{(n-3)(n-2)}{2}L(1).
\tag{Eq$_n$}
\end{equation}
\end{thma}   
L'équation \eqref{eqintro:Eqn} pour $n=5$ est exactement l'équation à $5$ termes
précédemment citée. On montre aussi 
\begin{thma}[Theorème \ref{thm:5->n}]
L'équation \eqref{eqintro:Eqn} pour $n=5$ implique l'équation
\eqref{eqintro:Eqn} pour tout $n \geqs 4$. 
\end{thma}

L'auteur espère par ailleurs que cette approche éclairera sous un autre angle
les équations fonctionnelles du dilogarithme associées à l'étude des algèbres
amassées et des $Y$-systèmes \cite{ChapoFIRDCYS,KellerCTQDI,IIKKN1,
  IIKKN2}. Plus précisemment, les équations \eqref{eqintro:Eqn} semblent proches
des équations 
fonctionnelles associées aux $Y$-systèmes de type $A_{n-3}$ comme présenté dans
\cite{ChapoFIRDCYS}. Pour $n=4$ (une
variable) et $n=5$ (deux variables), la fonction rationnelle
$f(t)=\dfrac{1}{1+\frac{1}{t}}=\dfrac{t}{1+t}$ avec $f(x)=u_{1,3}$ et
$f(y)=u_{1,4}$ permet de passer des $Y$-systèmes
\[
x, \, \frac{1}{x} \quad (n=4) \qquad \mbox{ et }\qquad 
x,\, y,\frac{1+x}{y},\, \frac{1+y}{x}, \, \frac{1+x+y}{xy} \quad (n=5)
\]
aux coordonnées dihédrales
\begin{multline*}
u_{1,3},\, u_{2,4}=1-u_{1,3} \quad (n=4) \qquad \mbox{ et }\\
u=u_{1,3},\,v=u_{1,4},\, u_{3,5}=\frac{1-v}{1-uv},\, u_{2,4}=\frac{1-u}{1-uv},\, 
 u_{2,5}=1-uv\quad (n=5).
\end{multline*} Cependant, dès
le cas $n=5$, les
deux types d'équations fonctionnelles diffèrent par leurs formes ; l'une faisant
intervenir $L(\frac{1}{1+x})$ et l'autre $L(u_{1,3})=L(\frac{x}{1+x})$ ; ainsi
que par leurs termes constants $2L(1)$ d'un côté et $3L(1)$ de l'autre. Ainsi  la
relation entre ces deux familles semblables d'équations fonctionelles n'est pas
immédiate. De plus, dès $n=6$,  la correspondance entre les termes des
$Y$-systèmes et les coordonnées dihédrales donnée par la fonction rationnelle
$f$ n'est plus évidente, laissant entrevoir la nécessité d'une fonction
rationnelle plus complexe. 

La section suivante rappelle certains aspects des coordonnées diédrales et leurs
comportements vis à vis des applications d'oubli de points marqués entre espaces
de modules de courbes. La section \ref{sec:eqfunct} prouve
le Théorème \ref{thm:eqfctn}. La  section \ref{sec:5->n} prouve le Théorème
\ref{thm:5->n} après quelques rappels sur les applications d'oublis de points
marqués entre espaces de modules $\m_{0,n}$.

\section{Coordonnées diédrales et espaces de modules $\m_{0,n}$}
\label{sec:coorddihedral}
\subsection{Coordonnées diédrales}
On rappelle ici brièvement la construction et quelques propriétés des coordonnées
diédrales sur $\m_{0,n}$. Pour plus de détails le lecteur consultera
\cite[Section 2]{BrownMZVPMS}. 

Soit $n \geqs 4$. On note $\m_{0,n}$ l'espace de module de courbes de genre $0$
à $n$ points marqués. Si $(\p^1)^n_*$ représente l'espace des $n$-uplets 
d'éléments distincts de $\p^1$, l'espace 
$\m_{0,n}$ s'identifie à $(\p^1)^n_*$ modulo l'action diagonale de
$\on{PSL}_2$.

Pour quatre éléments distincts $i$, $j$, $k$, $l$ de $\{1,\ldots,n\}$ le
birapport  $[ij|kl]$ est défini par
\[
[i\,j|k\, l]=\frac{(z_i-z_k)(z_j-z_l)}{(z_i-z_l)(z_j-z_k)}
\]
où  $z_1, \ldots, z_n$  sont les coordonnées affines standard sur
$(\p^1)^n$. Ces birapports sont $\on{PSL}_2$ invariants mais  ne sont bien évidemment pas
indépendants (par exemple $[i\,j|k\,l]=[lk|ji]$). Parmi ceux-ci on distingue les
\emph{coordonnées diédrales} définies pour $i,i+1,j,j+1$ distincts modulo $n$ par 
\[
u_{i,j}=[i\, i+1|j+1\, j]
\] 
qui ne dépend que de la paire $\{i,j\}$. On note $\chi_n$ l'ensemble de ces paires. 

Le nom \emph{diédral} prend sa source dans
la représentation suivante. Le groupe des permutations sur $n$ éléments $S_n$ agit
sur $\m_{0,n}$ par permutation des points marqués. Les points réels de $\m_{0,n}$
se décomposent en une union de composantes connexes. Pour l'une d'elle, la
cellule standard, les points marqués sont dans l' ''ordre cyclique'' naturel sur
le cercle $\p^1(\R)$: $z_1<z_2 <\cdots <z_n<z_1$. L'action du groupe diédral
préserve cette cellule ainsi que l'ensemble des birapports $u_{i,j}$. F. Brown
commence lui par fixer une structure diédrale sur les points marqués (c'est à
dire par choisir une composante connexe). Nous sommes ici dans un cas
particulier de sa construction. On note 
$\delta$, ou $\delta_n$ si la précision du nombre de points marqués est
nécessaire, l'ordre cyclique standard $\{1<2<\ldots <n<1\}$.

Les paires $\{i,j\}$ s'identifient aux cordes d'un $n$-gone dont les sommets sont
numérotés de $1$ à $n$. Les birapports $u_{i,j}$ satisfont les relations 
(voir \cite[Corollaire 2.3]{BrownMZVPMS})
\begin{equation}\label{eq:relcordes}
\prod_{\substack{ a \leqs i\leqs b-1 \\c \leqs j \leqs d-1}} u_{i,j} + 
\prod_{\substack{b\leqs k\leqs  c-1 \\ d\leqs l\leqs a-1}} u_{k,l}  =1
\end{equation}
où l'ensemble ordonné $\{1<\ldots<n\}$ a été identifié à $\Z/n\Z$ muni de
l'ordre cyclique standard $1<2<\ldots< n=0<1$.

L'identification entre les paires $\{i,j\}$ et les cordes d'un polygone est
représentée à la figure \ref{fig:cordeij}. Il faut noter que les côtés du
polygone portent le label correspondant au point marqué et que le sommet $i$
représente le couple $(z_i, z_{i+1})$.
\begin{figure}[h]
\begin{center}
\begin{tikzpicture}
\foreach \p in {0,1,2,3,4,5,6,7,8}
   \node[intvertex] (P\p)  at (360/8*\p:2){};
\draw (P7) -- (P0) node[midway, right, mathsc] {z_{j+1}}
           -- node[midway, right, mathsc] {z_j} (P1);
\draw (P3) -- (P4) node[midway, left, mathsc] {z_{i+1}}
           -- node[midway, left, mathsc] {z_{i}} (P5);
\draw[dashed] (P2)--(P3);
\draw[dashed] (P5)--(P6);
\draw[dashed] (P1)--(P2);
\draw[dashed] (P6)--(P7);
\draw (P4)-- (P0);
\node[mathsc,xshift=-2ex] at (P4) {i};
\node[mathsc,xshift=2ex] at (P0) {j};
\node[mathsc,xshift=-3ex] at (P3) {i+1};
\node[mathsc,xshift=3ex] at (P7) {j+1};
\node[mathsc,xshift=-3ex] at (P5) {i-1};
\node[mathsc,xshift=3ex] at (P1) {j-1};
\end{tikzpicture} 
\end{center}
\caption{}
\label{fig:cordeij}
\end{figure}
Les relations données par l'équation \eqref{eq:relcordes} sont représentées à la
figure \ref{fig:relcordes}.
\begin{figure}
\begin{center}
\begin{tikzpicture}
\foreach \p in {0,1,2,3,4,5,6,7,8}
{  \coordinate (C\p) at (360/8*\p:2){};
  \node[intvertex] (P\p)  at (360/8*\p:2){};
 } 
\foreach \p/ \n in {0/c-1,1/b,2/b-1,3/a,4/a-1,5/d,6/d-1,7/c}
{  \node[mathsc]  at (360/8*\p:2.35){\n};}
%
\draw (P1) -- (P2);
\draw (P3) -- (P4);
\draw (P5) -- (P6);
\draw (P7) -- (P8);
\draw[dashed] (P2)--(P3);
\draw[dashed] (P0)--(P1);
\draw[dashed] (P4)--(P5);
\draw[dashed] (P6)--(P7);
\node[mathsc] at (360/16*5:2.2){\mathbf A} ;
\node[mathsc]at (360/16*13:2.2){\mathbf A} ;
\node[mathsc]at (360/16*1:2.2){\mathbf B} ;
\node[mathsc]at (360/16*9:2.2){\mathbf B} ;
%
\filldraw[white,fill=gray!40,opacity=0.5] (C2) -- (C3) -- (C6) --  (C7)--  cycle;
\filldraw[white,fill=gray!40,opacity=0.5] (C0) -- (C1) -- (C4) --  (C5) --  cycle;
\end{tikzpicture} 
\end{center}
\caption{}
\label{fig:relcordes}
\end{figure}
Les indices des coordonnées $u_{i,j}$ du premier terme
de l'équation \eqref{eq:relcordes} sont dans la partie $\mathbf{A}$ pendant que
ceux du deuxième terme sont dans la partie $\mathbf{B}$ : les deux ensembles de
cordes se coupent complètement (\cite[p. 384 et Figure 2]{BrownMZVPMS}). Dans le
cas où $\mathbf{A}$ ne contient qu'une corde, la relation \eqref{eq:relcordes} s'écrit
\begin{equation}\label{eq:relijcorde}
u_{a,c}+\prod_{\substack{a+1\leqs k\leqs  c-1 \\ c+1\leqs l\leqs a-1}} u_{k,l}=1
\end{equation}
On note $I_n\subset \Z[u_{i,j}]$ l'idéal engendré par les équations
\eqref{eq:relcordes} pour $\{i,j\}\in \chi_n$. F. Brown définit l'espace
$\m_{0,n}^{\delta}$ par 
\[
\m_{0,n}^{\delta}=\Spec\left( \Z[u_{i,j}|\{i,j\}\in \chi_n]/I_n\right)
\]
et montre la suite d'inclusions 
\[
\m_{0,n} \lra \m_{0,n}^{\delta}\lra \ol{\m_{0,n}},
\]
ainsi que  (\cite[Lemme 2.5]{BrownMZVPMS})
\[
\m_{0,n}=\Spec\left( \Z[u_{i,j}, \frac{1}{u_{i,j}}|\{i,j\}\in \chi_n]/I_n\right) ;
\] 
$\ol{\m_{0,n}}$ désignant ci-desssus la compactification de Deligne-Mumford de $\m_{0,n}$.
Le cellule standard fermée $\Phi_n$  correspond aux points réels de
$\m_{0,n}^{\delta}$  pour lesquels $u_{i,j}\geqs 0$ pour toute paire
$\{i,j\}$, on a aussi (\cite[Section 2.7]{BrownMZVPMS})
\[
\Phi_n=\{0\leqs u_{i,j}\leqs 1 \mbox{ pour tout } \{i,j\}\in \chi_n \}\subset
\m_{0,n}^{\delta}(\R).
\]
On note $\Phi_n^{\circ}$ la cellule ouverte.

Si les coordonnées diédrales ne sont pas indépendantes, on peut cependant en
choisir des sous-ensembles donnant des coordonnées globales sur $\m_{0,n}$
(\cite[Corollaire 2.23]{BrownMZVPMS}). On travaille ici avec les sous-ensembles
``en étoile''
\[
\{u_{1,3}, \ldots , u_{1,n-1}\}.
\] 
Chaque birapport $[z_{i_1}z_{i_2}|z_{i_3}z_{i_4}]$ s'exprime ainsi comme une
fonction rationnelle en les ${u_{1,j}}$. Dans le  cas où $n=5$, les relations
\eqref{eq:relijcorde} donnent en particulier
\[
u_{2,5}=1-u_{1,3}u_{1,4},\quad u_{2,4}u_{2,5}=1-u_{1,3}, \quad u_{3,5}u_{2,5}=1-u_{1,4},
\]
ce qui ce traduit avec $x=u_{1,3}$ et $y=u_{1,4}$ par  
\[
u_{1,3}=x, \, u_{2,4}=\frac{1-x}{1-xy},\, u_{3,5}=\frac{1-y}{1-xy},\, 
u_{1,4}=y, u_{2,5}=1-xy.
\]
On retrouve  les $5$-termes de la relation de $5$-cycle de $L$.

\subsection{Applications d'oubli de points marqués}
Soit $J$ un sous-ensemble de $S=\{1,\ldots ,n\}$. On note $f_J$ le morphisme
\[
f_J : \m_{0,n} \longrightarrow \m_{0,n-|J|}
\]
oubliant les points marqués d'indice dans $J$. Le morphisme induit
$\m_{0,n}^{\delta} \longrightarrow \m_{0,n-|J|}^{\delta}$ est aussi noté
$f_J$. Les morphismes $f_{J_1}$ et $f_{J_2}$ commutent dès que $J_1\cap J_2
=\emptyset$ pourvu que $n-|J_1|-|J_2|\geqs 4$. La composition est alors égale à
$f_{J_1\cup J_2}$.  

Les coordonnées diédrales sont compatibles avec ces applications d'oubli de points
marqués. Il faut cependant être attentif à indexer les points marqués de
l'espace cible $\m_{0,n-|J|}$ par les éléments de 
\[
S\sm J=\{1,\ldots ,n\}\sm J.
\] 
Dans ce cas, le polygone correspondant aura des côtés décorés par $z_i$ avec $i
\in S \sm J$ et des sommets décorés par des uplets $(i,i+1,\ldots, i+k)$
traduisant les points oubliés $z_{i+1}$, ... $z_{i+k}$.
\begin{exm}
\begin{enumerate}
\item Pour $J=\{i\}$, on représente $f_J$ par 
\[
\begin{tikzpicture}[baseline=(current bounding box.center)]
\foreach \p in {0,1,2,3,4,5,6,7,8}
{  \coordinate (C\p) at (360/8*\p:2){};
  \node[intvertex] (P\p)  at (360/8*\p:2){};
 } 
\node[mathsc,xshift=2ex,yshift=1ex] at (P2){i-2};
\node[mathsc,xshift=3ex] at (P1){i-1};
\node[mathsc,xshift=3ex] at (P0){i};
\node[mathsc,xshift=3ex] at (P7){i+1};
\node[mathsc,xshift=2ex] at (360/16:2.05){z_i};
\node[mathsc,xshift=2ex] at (-360/16:2.05){z_{i+1}};
\node[mathsc,xshift=2ex] at (360/16*3:2.05){z_{i-1}};
\draw[thin] (P2)--(P1);
\draw[thin] (P0)--(P7);
\draw[dashed] (P7)--(P6)--(P5)--(P4) -- (P3) -- (P2)--(P1);
\draw[very thick] (P1)--(P0);
\end{tikzpicture} 
\xrightarrow[f_J]{}\quad 
\begin{tikzpicture}[baseline=(current bounding box.center)]
\foreach \p in {0,1,2,3,4,5,6,7,8}
{  \coordinate (C\p) at (360/8*\p:2){};
  \node[intvertex] (P\p)  at (360/8*\p:2){};
 } 
\node[mathsc,xshift=3ex] at (P1){i-2};
\node[mathsc,xshift=3.5ex] at (P0){(i-1,i)};
\node[mathsc,xshift=3ex] at (P7){i+1};
\node[mathsc,xshift=2ex] at (360/16:2.05){z_{i-1}};
\node[mathsc,xshift=2ex] at (-360/16:2.05){z_{i+1}};
\node[mathsc,xshift=2ex] at (360/16*3:2.05){z_{i-2}};
\draw (P1)--(P0)--(P7);
\draw[dashed] (P7)--(P6)--(P5)--(P4) -- (P3) -- (P2)--(P1);
\end{tikzpicture} 
\]
\item Pour $J=\{3,4,7 \}\subset \{1,\ldots, 8\}$, on représente $f_J$ par
\[
\begin{tikzpicture}[baseline=(current bounding box.center)]
\foreach \p / \q \qq in {8/13/1,1/15/2,2/1/3,3/3/4,4/5/5,5/7/6,6/9/7,7/11/8}
{  \coordinate (C\p) at (360/8*\p:2){};
  \node[intvertex] (P\p)  at (360/8*\p:2){};
  \node[mathsc] at (-360/16*\q:2.1) {z_{\p}};
  \node[mathsc] at (-360/8*\p:2.2) {\qq};
 } 
\draw[thin] (P2)--(P1);
\draw[thin] (P0)--(P7);
\draw[very thick] (P7)--(P6);
\draw[very thick] (P6)--(P5);
\draw[thin] (P5)--(P4);
\draw[thin] (P4) -- (P3);
\draw[very thick] (P3) -- (P2);
\draw[thin] (P2)--(P1);
\draw[thin] (P1)--(P0);
\end{tikzpicture} 
\xrightarrow[f_J]{}\quad 
\begin{tikzpicture}[baseline=(current bounding box.center)]
\foreach \p in {0,1,2,3,4}
{  \coordinate (C\p) at (360/5*\p:2){};
  \node[intvertex] (P\p)  at (360/5*\p:2){};
 } 
\node[mathsc] at (360/5*2:2.25){(6,7)};
\node[mathsc] at (360/5*1:2.25){8};
\node[mathsc] at (360/5*0:2.25){1};
\node[mathsc] at (360/5*4:2.25){(2,3,4)};
\node[mathsc] at (360/5*3:2.25){5};
\node[mathsc] at (360/10*1:2.1){z_1};
\node[mathsc] at (360/10*3:2.1){z_{8}};
\node[mathsc] at (360/10*5:2.1){z_{6}};
\node[mathsc] at (360/10*7:2.1){z_{5}};
\node[mathsc] at (360/10*9:2.1){z_{2}};
\draw (P1)--(P0)--(P4)--(P3)--(P2)--(P1);
\end{tikzpicture} 
\]
\item On s'intéressera ici au cas où $J$ est constitué de suites successives d'un
  indice sur deux. Par exemple pour %
$J=\{4,6\}\cup\{10 \}\subset \{1,\ldots  10\}$ :
\[
\begin{tikzpicture}[baseline=(current bounding box.center)]
\foreach \p/\n/\pd/\nd in 
{0/1/1/2,
1/2/3/3,
2/3/5/4,
3/4/7/5,
4/5/9/6,
5/6/11/7, 
6/7/13/8,
7/8/15/9,
8/9/17/10, 
9/10/19/1}
{  \coordinate (C\p) at (-360/10*\p:2){};
  \node[intvertex] (P\p)  at (-360/10*\p:2){};
  \node[mathsc] at (-360/20*\pd:2.1) {z_{\nd}};
  \node[mathsc] at (-360/10*\p:2.2) {\n};
 } 
\draw[thin] (P2)--(P1);
\draw[thin] (P0)--(P9);
\draw[thin] (P8)--(P7);
\draw[thin] (P6)--(P7);
\draw[thin] (P5)--(P6);
\draw[thin] (P4) -- (P3);
\draw[very thick] (P2) -- (P3);
\draw[very thick] (P4)--(P5);
\draw[very thick] (P8)--(P9);
\draw[thin] (P3)--(P4);
\draw[thin] (P2)--(P1);
\draw[thin] (P1)--(P0);
\end{tikzpicture} 
\xrightarrow[f_J]{}\quad 
\begin{tikzpicture}[baseline=(current bounding box.center)]
\foreach \p/\n/\pd/\nd in 
{
0/1/1/2,
1/2/3/3,
2/{(3,4)}/5/5,
3/{(5,6)}/7/7,
4/7/9/8,
5/8/11/9, 
6/{(9,10)}/13/1
}
{  \coordinate (C\p) at (-360/7*\p:2){};
  \node[intvertex] (P\p)  at (-360/7*\p:2){};
  \node[mathsc] at (-360/14*\pd:2.1) {z_{\nd}};
  \node[mathsc] at (-360/7*\p:2.3) {\n};
 } 
\draw (P1)--(P0)--(P6)--(P5)--(P4)--(P3)--(P2)--(P1);
\end{tikzpicture} 
\]
Dans cet exemple, une des ``suites d'un indice sur deux'' est $(4,6)$, l'autre
simplement $10$. 
\end{enumerate}
\end{exm}

Le $n-|J|$ polygone correspondant à $\m_{0,n-|J|}$ mais décoré en tenant compte
du morphisme $f_J$ est noté $\Gamma_n^J$. Un sommet $\mb i=(i,i+1,\ldots , i+k)$
de ce polygone rencontre ainsi les côtés décorés  par $z_i$ et
$z_{i+k+1}$. Comme précédemment, à une corde stricte de $\Gamma_n^J$ correspond
un unique birapport
\[
u_{\mb i, \mb j}^{n,J}=[z_i\, z_{i+k+1}|z_{j+l+1}\,z_j] \in \mc O(\m_{0,n-|J|})
\]
où $\mb i=(i,\ldots , i+k)$ et $\mb j=(j,\ldots,j+l)$. Si $m=n-|J|$, on écrira   parfois
simplement $u_{\mb  i, \mb j}^{m}$ pour $u_{\mb i, \mb j}^{n,J}$. Lorsque $\mb
i=(i)$ (ou $\mb j=(j)$), on utilisera parfois la notation plus simple
$u^{n,J}_{i,\mb j}=u^m_{i,\mb j}$. Ces simplifications de la notation s'expliquent
par la compatibilité  des coordonnées diédrales vis à vis de $f_J$ 
(\cite[Lemma 2.9]{BrownMZVPMS}) 
\begin{equation}\label{eq:fT*uij}
f_T^*(u_{\mb i, \mb j}^{n, J})=\prod_{\substack{i \in \mb i\\ j \in \mb j}}u_{i,j}
\end{equation}
 où par un abus de notation $\mb i$ (resp. $\mb j$) représente l'ensemble
 associé au uplet $\mb i$ (resp. $\mb j$). 

\begin{rem}\label{rem:fT*uij}
L'équation ci-dessus montre en
 particulier que $f_T^*(u_{\mb i, \mb j}^{n, J})$ ne dépend que de $\mb i$ et de
 $\mb j$. Ceci justifie la simplification $u_{\mb i, \mb j}^{n, J}=u_{\mb i, \mb
   j}^{m}$ que l'on pourrait simplement noter $u_{\mb i, \mb j}$.
\end{rem}
\section{\'Equations fonctionnelles du dilogarithme}
\label{sec:eqfunct}
Après ces préliminaires, on montre ici que le dilogarithme de Rogers $L$
satisfait une équation fonctionnelle sur $\m_{0,n}$ pour chaque $n\geqs 4$. On
fixe $n\geqs 4$ et on note $\Phi_n^{\circ}$ la cellule standard ouverte de
$\m_{0,n}(\R)$. Cette cellule $\Phi_n^{\circ}$ peut s'identifier au simplexe
ouvert standard $\Delta_{n-3}=\{0<t_1<\cdots <t_{n-3}<1\}$ (coordonnées
simpliciales de $\m_{0,n}$) ou avec le cube $[0,1]^{n-3}$ (coordonnées cubiques
de $\m_{0,n}$). On utilisera ici les coordonnées diédrales présentées
précédemment. 
\begin{thm}\label{thm:eqfctn} Pour tout point $P$ de $\Phi_n^{\circ}$ on a 
\begin{equation}\label{eq:Eqn}
\sum_{\{i,j\} \in \chi_n} L(u_{i,j})=\frac{(n-3)(n-2)}{2}L(1)
\tag{Eq$_n$}
\end{equation}
où par un abus de notations $u_{i,j}=u_{i,j}(P)$.
\end{thm}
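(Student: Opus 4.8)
The plan is to prove that the left-hand side of \eqref{eq:Eqn} is locally constant on $\Phi_n^{\circ}$ by a differentiation argument, and then to identify the constant by a limit towards a corner of the cell. Recall first that the Rogers dilogarithm satisfies
\[
\dd L(x) = \frac{1}{2}\bigl(\log(x)\,\dd\log(1-x) - \log(1-x)\,\dd\log(x)\bigr),
\]
a $1$-form which is antisymmetric in the two factors $x$ and $1-x$. On $\Phi_n^{\circ}$ every coordinate $u_{i,j}$ lies in $]0,1[$, so both $\log(u_{i,j})$ and $\log(1-u_{i,j})$ are real-analytic and single-valued there, and \eqref{eq:Eqn} is a genuine identity between real-analytic functions with no branch ambiguity.

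The key input is the chord relation \eqref{eq:relijcorde}, which I read geometrically: for a chord $\{i,j\}\in\chi_n$,
\[
1 - u_{i,j} = \prod_{\substack{\{k,l\}\in\chi_n\\ \{k,l\}\,\text{crosses}\,\{i,j\}}} u_{k,l},
\]
the product ranging over exactly those chords $\{k,l\}$ that cross $\{i,j\}$ (one endpoint on each of the two arcs cut out by $\{i,j\}$). Taking logarithmic differentials gives $\dd\log(1-u_{i,j}) = \sum_{\{k,l\}\,\text{crosses}\,\{i,j\}} \dd\log(u_{k,l})$, and likewise for $\log(1-u_{i,j})$. Substituting these into $\dd\bigl(\sum_{\{i,j\}} L(u_{i,j})\bigr)$ yields
\[
\dd\!\left(\sum_{\{i,j\}\in\chi_n} L(u_{i,j})\right)
= \frac{1}{2}\sum_{\{i,j\}\in\chi_n}\ \sum_{\substack{\{k,l\}\in\chi_n\\ \{k,l\}\,\text{crosses}\,\{i,j\}}}
\bigl(\log u_{i,j}\,\dd\log u_{k,l} - \log u_{k,l}\,\dd\log u_{i,j}\bigr).
\]
The summand is antisymmetric under exchanging the ordered couple $(\{i,j\},\{k,l\})$, whereas the indexing condition ``$\{k,l\}$ crosses $\{i,j\}$'' is a symmetric relation between the two chords (and a chord never crosses itself). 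Hence the transposition $(\{i,j\},\{k,l\})\mapsto(\{k,l\},\{i,j\})$ is a fixed-point-free involution of the index set pairing each term with its negative, so the whole sum vanishes. As $\Phi_n^{\circ}$ is connected (it is homeomorphic to the open simplex $\Delta_{n-3}$), the left-hand side of \eqref{eq:Eqn} is constant on $\Phi_n^{\circ}$.

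It remains to compute this constant, which I do by letting $P$ tend, inside $\Phi_n^{\circ}$, to the corner attached to the fan triangulation $T_0 = \{\{1,3\},\dots,\{1,n-1\}\}$. Using the star coordinates $\{u_{1,3},\dots,u_{1,n-1}\}$ as global coordinates, I take a sequence of points along which all $n-3$ of these coordinates tend to $0^{+}$. Any chord $\{k,l\}\notin T_0$ avoids the vertex $1$, hence crosses some fan chord $\{1,m\}$ (choose $m$ strictly between $k$ and $l$ on the arc not containing $1$); by \eqref{eq:relijcorde} the product expressing $1-u_{k,l}$ then contains the factor $u_{1,m}\to 0$ while all factors stay in $[0,1]$, so $u_{k,l}\to 1$. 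Since $L$ is continuous with $L(0)=0$ and $L(1)=\frac{\pi^2}{6}$, the (constant) sum equals $\#(\chi_n\setminus T_0)\,L(1) = \bigl(\frac{n(n-3)}{2}-(n-3)\bigr)L(1) = \frac{(n-3)(n-2)}{2}L(1)$, as claimed.

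The main obstacle is the vanishing of the differential in the third paragraph: it rests on correctly identifying the product in \eqref{eq:relijcorde} with the set of chords crossing $\{i,j\}$ and on the symmetry of the crossing relation, which is precisely what forces the term-by-term cancellation; keeping track of the signs and of the factor $\frac12$ in $\dd L$ is the only delicate bookkeeping. The boundary evaluation is then routine, the single point deserving care being the combinatorial fact that every chord not in $T_0$ genuinely crosses a fan chord.
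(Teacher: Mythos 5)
Your proof is correct and follows essentially the same route as the paper's: the differential of the sum is shown to vanish by substituting the logarithmic differentials coming from the one-chord relations \eqref{eq:relijcorde} and exploiting the symmetry of the crossing relation (the paper phrases this as an interchange of the two summations rather than as a fixed-point-free involution, but the cancellation is identical), and the constant is then evaluated by letting $u_{1,3}=\cdots=u_{1,n-1}\to 0$, exactly as indicated in the paper. Your write-up is in fact somewhat more explicit than the paper's on the boundary evaluation, where the paper only states that the constant is determined by this limit and the relations \eqref{eq:relcordes}.
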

\begin{coro}\label{coro:Eqnf}
On repère $P\in \Phi_n^{\circ}$ par ses coordonnées $P=(u_{1,3},\ldots
u_{1,n-1})$ avec $u_{i,j}\in 
[0,1]$. On a alors:  
\begin{equation}\label{eq:Eqnf}
\sum_{\{i,j\} \in \chi_n} L(f_{i,j}(u_{1,3},\ldots, u_{1,n-1}))=\frac{(n-3)(n-2)}{2}L(1)
\end{equation}
où $f_{1,j}(u_{1,3},\ldots, u_{1,n-1})=u_{1,j}$ pour $j$ dans $\{3, n-1\}$ et où,
pour $i,j \neq 1$, $f_{i,j}$ est une fonction rationnelle en les $u_{1,j}$ donnée
par les relations de l'équation \eqref{eq:relcordes} (cf. \cite[Corollaire
2.23]{BrownMZVPMS}).  
\end{coro}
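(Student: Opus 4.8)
The plan is to show that the left-hand side of \eqref{eq:Eqn}, regarded as a function on the open cell $\Phi_n^{\circ}$, is constant, and then to pin down that constant by a limit computation at a vertex of the cell. The star coordinates $(u_{1,3},\ldots,u_{1,n-1})$ identify $\Phi_n^{\circ}$ with the open cube $(0,1)^{n-3}$, which is connected; it therefore suffices to prove that the total differential of
\[
F := \sum_{\{i,j\}\in\chi_n} L(u_{i,j})
\]
vanishes identically on $\Phi_n^{\circ}$.

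The computation rests on the antisymmetric shape of the differential of the Rogers dilogarithm. Differentiating $L(x)=Li_2(x)+\frac{1}{2}\log(x)\log(1-x)$ gives, on the interval $(0,1)$ where all logarithms are real,
\[
\dd L(u)=\frac{1}{2}\bigl(\log u\,\dd\log(1-u)-\log(1-u)\,\dd\log u\bigr).
\]
Summing over $\chi_n$,
\[
\dd F=\frac{1}{2}\sum_{\{i,j\}\in\chi_n}\bigl(\log u_{i,j}\,\dd\log(1-u_{i,j})-\log(1-u_{i,j})\,\dd\log u_{i,j}\bigr).
\]

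Now I invoke the relations \eqref{eq:relcordes}. Specialised to a single chord, \eqref{eq:relijcorde} expresses each factor $1-u_{i,j}$ as a monomial in the remaining coordinates,
\[
1-u_{i,j}=\prod_{\{k,l\}} u_{k,l},
\]
the product running over precisely those chords $\{k,l\}$ of the $n$-gon that cross $\{i,j\}$. Taking logarithms and logarithmic differentials, $\log(1-u_{i,j})=\sum \log u_{k,l}$ and $\dd\log(1-u_{i,j})=\sum \dd\log u_{k,l}$, both sums over the crossing chords. Substituting, $\dd F$ becomes a sum over \emph{ordered} pairs of crossing chords. Because the crossing relation is symmetric, an unordered crossing pair $\{i,j\}$, $\{k,l\}$ contributes
\[
\frac{1}{2}\bigl(\log u_{i,j}\,\dd\log u_{k,l}-\log u_{k,l}\,\dd\log u_{i,j}\bigr)+\frac{1}{2}\bigl(\log u_{k,l}\,\dd\log u_{i,j}-\log u_{i,j}\,\dd\log u_{k,l}\bigr)=0.
\]
Hence every pair cancels and $\dd F\equiv 0$, so $F$ is constant on $\Phi_n^{\circ}$.

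It remains to evaluate the constant. Since each $u_{i,j}$ extends continuously to the closed cell $\Phi_n$ and $L$ is continuous on $[0,1]$ with $L(0)=0$, I pass to the limit at the vertex of $\Phi_n$ attached to a triangulation $T$ of the $n$-gon. There $u_{i,j}\to 0$ for the $n-3$ chords $\{i,j\}\in T$; and for any chord $\{i,j\}\notin T$, maximality of $T$ forces $\{i,j\}$ to cross some chord of $T$, so the monomial above acquires a vanishing factor and $u_{i,j}\to 1$. Among the $n(n-3)/2$ chords exactly $n-3$ tend to $0$ and the remaining $(n-3)(n-2)/2$ tend to $1$, whence $F=\frac{(n-3)(n-2)}{2}L(1)$, as claimed.

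The main obstacle is the vanishing of $\dd F$: the argument hinges on \eqref{eq:relijcorde} identifying $1-u_{i,j}$ with the product over crossing chords, \emph{each counted exactly once}, so that the antisymmetric pairing is a perfect involution with no fixed or leftover term. The final evaluation is then elementary, using only the continuity of $L$ and the fact that a maximal non-crossing family of chords of an $n$-gon has $n-3$ elements.
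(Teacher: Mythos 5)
Your proposal is correct and follows essentially the same route as the paper: the differential of $\sum L(u_{i,j})$ is shown to vanish by substituting the relation \eqref{eq:relijcorde} into $\log(1-u_{i,j})$ and $\dd\log(1-u_{i,j})$ and cancelling over pairs of crossing chords (your antisymmetry argument is the paper's interchange of the two summations), and the constant is fixed by a limit at a vertex of $\Phi_n$ — the paper uses the fan triangulation $u_{1,3}=\cdots=u_{1,n-1}=0$, of which your argument via an arbitrary triangulation $T$ is a mild generalisation. Your evaluation of the constant is in fact spelled out more carefully than in the paper, which only sketches this step.
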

\begin{proof}[Preuve du Théorème \ref{thm:eqfctn}] On montre que la
  différentielle du membre de droite de \eqref{eq:Eqn} est nulle ; la constante
  étant déterminée par la limite pour $u_{1,3}=\cdots =u_{1,n-1}=0$ et les
  relations issues de \eqref{eq:relcordes}.

On rappelle que $L(x)=Li_2(x)+\frac 1 2 \log(x)\log(1-x)$. Pour une paire
$\{i,j\}$ de $\chi_n$ on a  donc
\[
\dd\left(L(u_{i,j})\right)=-\frac 1 2 \,
\frac{\dd u_{i,j}}{u_{i,j}}\log(1-u_{i,j})
+\frac 1 2\, \frac{\dd(1-u_{i,j})}{1-u_{i,j}}\log(u_{i,j}).
\]
Dans le cas d'une corde, la relation \eqref{eq:relcordes} se réduit à l'équation
\eqref{eq:relijcorde}:
\[
u_{i,j}+\prod_{\substack{i+1\leqs k\leqs  j-1 \\ j+1\leqs l\leqs i-1}} u_{k,l}=1
\]
où l'on identifie $\{1,\ldots,n\}$ avec $\Z/n\Z$ toujours muni de l'ordre
``cyclique standard''. 
On en déduit en particulier :
\[
\log(1-u_{i,j})=\sum_{\substack{i+1\leqs k\leqs  j-1 \\ j+1\leqs l\leqs i-1}} \log(u_{k,l})
\quad \mbox{ %
et }%
\quad 
\frac{\dd(1-u_{i,j})}{1-u_{i,j}}=
\sum_{\substack{i+1\leqs k\leqs  j-1 \\ j+1\leqs l\leqs i-1}} \frac{ \dd (u_{k,l})}{u_{k,l}}.
\]

En notant $E_n$ le membre de gauche de \eqref{eq:Eqn} on
a alors 
\begin{align*}
2\dd(E_n)=&\sum_{\{i,j\} \in \chi_n} 2\dd\left(L(u_{i,j})\right)\\
&=
\sum_{\{i,j\} \in \chi_n}\left(
-\frac{\dd u_{i,j}}{u_{i,j}}\log(1-u_{i,j})
+\frac{\dd(1-u_{i,j})}{1-u_{i,j}}\log(u_{i,j})\right)\\[1.5em]
=&-\sum_{\{i,j\} \in \chi_n}\left(
\frac{\dd u_{i,j}}{u_{i,j}}\log(1-u_{i,j})\right)
+\sum_{\{i,j\} \in \chi_n}\left(
\frac{\dd(1-u_{i,j})}{1-u_{i,j}}\log(u_{i,j})\right)\\[1.5em]
=&-\sum_{\{i,j\} \in \chi_n}\left(
\vphantom{\sum_{\substack{i+1\leqs k\leqs  j-1 \\ j+1\leqs l\leqs i-1}} }
\frac{\dd u_{i,j}}{u_{i,j}}\log(1-u_{i,j})\right)\\[1em]
&\hphantom{-\sum_{\{i,j\} \in \chi_n}\frac{\dd u_{i,j}}{u_{i,j}}\log()}
+\sum_{\{i,j\} \in \chi_n}\left(
\left(\sum_{\substack{i+1\leqs k\leqs  j-1 \\ j+1\leqs l\leqs i-1}} 
\frac{ \dd (u_{k,l})}{u_{k,l}}\right)
 \log(u_{i,j})\right)
\end{align*} 
En intervertissant les deux signes de somme de la dernière ligne on trouve 
\begin{multline*}
2\dd(E_n)=-\sum_{\{i,j\} \in \chi_n}\left(
\frac{\dd u_{i,j}}{u_{i,j}}\log(1-u_{i,j})\right)\\[1em]
+\sum_{\{k,l\} \in \chi_n}\left(
\frac{\dd (u_{k,l})}{u_{k,l}}\left(
\sum_{\substack{k+1\leqs i\leqs  l-1 \\ l+1\leqs j\leqs k-1}}
\log(u_{i,j})
\right) \right).
\end{multline*}
La somme des logarithmes du deuxième terme est égale à $\log(1-u_{k,l})$ d'où
\[
2\dd(E_n)=-\sum_{\{i,j\} \in \chi_n}\left(
\frac{\dd u_{i,j}}{u_{i,j}}\log(1-u_{i,j})\right)
+\sum_{\{k,l\} \in \chi_n}\left(
\frac{\dd (u_{k,l})}{u_{k,l}}\log(1-u_{k,l})\right)=0\,;
\]
ce qui conclut la preuve.
\end{proof}
En spécialisant l'une des coordonnées $u_{i,j}=0$, les relations
\eqref{eq:relcordes} se transforment en deux familles de relations du même type
; l'une des familles correspondant à $\m_{0,n_1}$, l'autre  à $\m_{0,n_2}$ avec
$n_1+n_2=n+2$. Cette décomposition est l'équivalent en coordonnées de l'inclusion
de  $\ol{\m_{0,n_1}}\times \ol{\m_{0,n_2}}$ dans $\ol{\m_{0,n}}$ (\cite[Section
2.3]{BrownMZVPMS}).  Ainsi, par
spécialisation, l'équation fonctionnelle \eqref{eq:Eqn} implique l'équation
(Eq$_{n-1}$). Dans le cas $n=5$, on obtient en particulier:
\begin{lem}\label{lem:Eq5->Eq4} En spécialisant $y=u_{1,4}=0$ l'équation
\[
L(u_{1,3})+L(u_{2,4})+L(u_{3,5})+L(u_{1,4})+L(u_{2,5})=3L(1)
\]
devient 
\[
L(u_{1,3})+L(u_{2,4})=L(1) \qquad \mbox{avec }u_{1,3}+u_{2,4}=1.
\]
On retrouve ici la relation de réflexion.
\end{lem}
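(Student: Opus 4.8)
Le plan est de proc\'eder par sp\'ecialisation directe des formules explicites \'etablies pr\'ec\'edemment pour les coordonn\'ees di\'edrales de $\m_{0,5}$. On rappelle que, en posant $x=u_{1,3}$ et $y=u_{1,4}$, les relations \eqref{eq:relcordes} fournissent
\[
u_{1,3}=x,\quad u_{2,4}=\frac{1-x}{1-xy},\quad u_{3,5}=\frac{1-y}{1-xy},\quad u_{1,4}=y,\quad u_{2,5}=1-xy.
\]
La premi\`ere \'etape serait de poser $y=0$ dans ces cinq expressions. Le point \`a observer est que le d\'enominateur commun $1-xy$ se sp\'ecialise en $1$ lorsque $y=0$ : aucune des fractions ne d\'eg\'en\`ere, et l'on obtient imm\'ediatement $u_{1,3}=x$, $u_{2,4}=1-x$, $u_{3,5}=1$, $u_{1,4}=0$ et $u_{2,5}=1$.

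La deuxi\`eme \'etape consisterait \`a reporter ces valeurs dans \eqref{eq:Eqn} pour $n=5$. L'\'equation
\[
L(u_{1,3})+L(u_{2,4})+L(u_{3,5})+L(u_{1,4})+L(u_{2,5})=3L(1)
\]
deviendrait alors $L(x)+L(1-x)+L(1)+L(0)+L(1)=3L(1)$. Il resterait \`a invoquer les valeurs aux bornes $L(0)=0$ et $L(1)=\frac{\pi^2}{6}$ pos\'ees dans l'introduction : les deux occurrences de $L(1)$ et le terme $L(0)$ se compensent avec le membre de droite, ce qui donne $L(x)+L(1-x)=L(1)$. L'\'egalit\'e $u_{1,3}+u_{2,4}=x+(1-x)=1$ \'etant imm\'ediate, on reconna\^it la relation de r\'eflexion \eqref{eq:reflex}.

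Le seul point d\'elicat, de nature analytique, est que la sp\'ecialisation $y=0$ place le point sur le bord de la cellule ouverte $\Phi_5^{\circ}$ et non \`a l'int\'erieur, l\`a o\`u le Th\'eor\`eme \ref{thm:eqfctn} a \'et\'e d\'emontr\'e. La partie d\'elicate sera donc de justifier le passage \`a la limite $y\to 0^{+}$ (\`a $x\in\,]0,1[$ fix\'e) : il faudra constater que chacune des cinq coordonn\'ees reste dans $[0,1]$ quand $y\to 0^{+}$, ce que montre le calcul ci-dessus, puis utiliser la continuit\'e de $L$ sur le segment $[0,1]$, avec les prolongements $L(0)=0$ et $L(1)=\frac{\pi^2}{6}$. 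Ce passage \`a la limite est l'expression en coordonn\'ees de l'inclusion de bord $\ol{\m_{0,4}}\times\ol{\m_{0,3}}\hookrightarrow\ol{\m_{0,5}}$ \'evoqu\'ee avant l'\'enonc\'e, qui transforme (Eq$_5$) en (Eq$_4$), c'est-\`a-dire pr\'ecis\'ement en la relation de r\'eflexion.
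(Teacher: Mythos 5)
Votre preuve est correcte et suit essentiellement la m\^eme d\'emarche que celle de l'article : sp\'ecialiser $u_{1,4}=0$ dans les relations \eqref{eq:relcordes} sur $\m_{0,5}$ pour obtenir $u_{2,5}=u_{3,5}=1$ et $u_{1,3}+u_{2,4}=1$, puis conclure avec $L(0)=0$ et $L(1)=\frac{\pi^2}{6}$ ; vous utilisez la param\'etrisation rationnelle explicite en $(x,y)$ l\`a o\`u l'article manipule directement les relations, ce qui revient au m\^eme. Votre remarque finale sur le passage \`a la limite vers le bord de $\Phi_5^{\circ}$ est un compl\'ement utile que l'article laisse implicite.
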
 
\begin{proof}
Les relations \eqref{eq:relcordes} sur $\m_{0,5}$ donnent en particulier
\begin{gather*}
u_{2,5}=1-u_{1,3}u_{1,4}, \qquad 
u_{3,5}=1-u_{1,4}u_{2,4}, \\
u_{1,3}+u_{2,4}u_{2,5}=1.
\end{gather*}
En spécialisant $u_{1,4}=0$, on trouve $u_{2,5}=u_{3,5}=1$ et
$u_{1,3}+u_{2,4}=1$. 
\end{proof}

\section{Réduction au cas de la relation à $5$-termes et de $\m_{0,5}$}\label{sec:5->n}
\begin{thm}\label{thm:5->n}
La famille d'équation \eqref{eq:Eqn} ($n\neq 5$) se déduit de l'équation
fonctionnelle à $5$-termes de $L$, \rm{(Eq$_5$)}.   
\end{thm}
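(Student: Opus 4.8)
The plan is to argue by induction on $n$, realizing the inductive step geometrically through the forgetful maps $f_J$ of Section~\ref{sec:coorddihedral} and algebraically through a single rewriting of the $5$-term relation \eqref{eq:5term}. Two base cases are already at hand: \eqref{eq:Eqn} for $n=5$ is the hypothesis, and \eqref{eq:Eqn} for $n=4$ is the reflection relation \eqref{eq:reflex}, which Lemme~\ref{lem:Eq5->Eq4} deduces from (Eq$_5$) by the specialization $u_{1,4}=0$. It therefore suffices to establish, for $n\geqs 6$, the implication (Eq$_{n-1}$) $\Rightarrow$ (Eq$_n$) assuming (Eq$_5$); since (Eq$_{n-1}$) will itself already have been reduced to (Eq$_5$), this chains all the way down to the $5$-term relation.

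First I would fix the forgetful map $\pi=f_{\{n\}}\colon \m_{0,n}\to\m_{0,n-1}$ erasing the last marked point, so that the vertices $n-1$ and $n$ of the $n$-gon merge into the single vertex $\mb V=(n-1,n)$ of the $(n-1)$-gon; note that $\pi$ sends $\Phi_n^{\circ}$ into $\Phi_{n-1}^{\circ}$, so pulling back (Eq$_{n-1}$) yields a valid identity on the cell. By the compatibility \eqref{eq:fT*uij}, every chord of the $(n-1)$-gon avoiding $\mb V$ contributes the same coordinate $u_{i,j}$, whereas each chord $\{\mb V,b\}$ contributes $\pi^*u^{n-1}_{\mb V,b}=u_{n-1,b}\,u_{n,b}$. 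Subtracting this pulled-back identity from the sum in \eqref{eq:Eqn} for the $n$-gon, all chords with both endpoints in $\{1,\dots,n-2\}$ cancel, leaving the ``defect''
\[
L(u_{n-1,1})+L(u_{n,n-2})+\sum_{b=2}^{n-3}\bigl(L(u_{n-1,b})+L(u_{n,b})-L(u_{n-1,b}u_{n,b})\bigr),
\]
where the index ranges come from the adjacencies of $n-1$, of $n$, and of $\mb V$. Since the right-hand constants satisfy $\frac{(n-2)(n-3)}{2}-\frac{(n-3)(n-4)}{2}=(n-3)$, establishing (Eq$_n$) amounts to showing this defect equals $(n-3)L(1)$.

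The key algebraic input is the ``product form'' of the $5$-term relation: combining \eqref{eq:5term} with the reflection relation \eqref{eq:reflex} gives
\[
L(x)+L(y)-L(xy)=L\!\left(\frac{x(1-y)}{1-xy}\right)+L\!\left(\frac{y(1-x)}{1-xy}\right),
\]
which is a consequence of (Eq$_5$) alone. Applying it to each bracket with $x=u_{n-1,b}$, $y=u_{n,b}$ turns the defect into a sum of $2(n-3)$ values of $L$. I would then simplify the arguments using the chord relations \eqref{eq:relcordes}--\eqref{eq:relijcorde}: the factors $1-u_{n-1,b}$, $1-u_{n,b}$, and $1-u_{n-1,b}u_{n,b}$ are all products of dihedral coordinates (the last because $u_{n-1,b}u_{n,b}=\pi^*u^{n-1}_{\mb V,b}$ satisfies such a relation on $\m_{0,n-1}$), so that each resulting argument is again a dihedral coordinate of the $n$-gon. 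The final step is to recognize that these $2(n-3)$ coordinates, together with the two boundary terms $u_{n-1,1}$ and $u_{n,n-2}$, arrange into a telescoping chain of $(n-3)$ complementary pairs $\{z,1-z\}$; each pair contributes $L(1)$ by \eqref{eq:reflex}, producing exactly $(n-3)L(1)$.

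The main obstacle is precisely this last combinatorial bookkeeping: checking that the expressions produced by the product form are genuine dihedral coordinates, and that consecutive terms are truly complementary (typically that the second argument arising from index $b$ is one minus the first argument arising from index $b+1$, with the boundary terms closing the chain). Every such identity is in fact forced---since \eqref{eq:Eqn} and (Eq$_{n-1}$) already hold by Th\'eor\`eme~\ref{thm:eqfctn}, the defect is a priori a true numerical identity---so the remaining work is not to discover the value but to exhibit the defect explicitly as a finite sum of instances of \eqref{eq:reflex} and of the product form of \eqref{eq:5term}, thereby certifying that (Eq$_n$) follows from (Eq$_5$) and nothing more.
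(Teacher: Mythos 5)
Your strategy is correct, but it is genuinely different from the one in the paper. After treating $n=6$ by hand, the paper does not compare (Eq$_n$) with a single (Eq$_{n-1}$): for $n=2k$ (resp.\ $2k+1$) it forms the alternating sum $\sum_{J\subset K}(-1)^{|J|}E_{2J}$ of the pullbacks of (Eq$_{n-|J|}$) along the maps forgetting the points $z_{2j}$, $j\in J$, and shows that this sum vanishes identically because the coefficient of each $L(u_{\mb i,\mb j})$ is an alternating sum of binomial coefficients equal to $0$; the constants cancel for the same reason. That argument is purely combinatorial --- no chord relation and no reflection enter beyond the base cases --- but it consumes the inductive hypothesis for all $m$ with $k\leqs m<n$ and requires a parity split. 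Your route --- subtract the pullback of (Eq$_{n-1}$) along $f_{\{n\}}$ and resolve the defect by the ``product form'' of the five-term relation plus reflections --- uses only (Eq$_{n-1}$), (Eq$_5$), (Eq$_4$) and the relations \eqref{eq:relcordes}, and is uniform in $n\geqs 6$. The bookkeeping you flag as the main obstacle does close up exactly as you predict: the chord relations for $\{b,n\}$, $\{b,n-1\}$ and for the merged vertex give
\[
A_b:=\frac{u_{n-1,b}\,(1-u_{n,b})}{1-u_{n-1,b}u_{n,b}}=\prod_{l=1}^{b}u_{n-1,l},
\qquad
B_b:=\frac{u_{n,b}\,(1-u_{n-1,b})}{1-u_{n-1,b}u_{n,b}}=\prod_{k=b}^{n-2}u_{n,k},
\]
the identity $A_b+B_{b+1}=1$ is precisely an instance of \eqref{eq:relcordes} (the chords from $n-1$ to $\{1,\dots,b\}$ and from $n$ to $\{b+1,\dots,n-2\}$ cross completely), and $B_2=1-u_{n-1,1}$, $A_{n-3}=1-u_{n,n-2}$ absorb the two boundary terms, yielding the required $(n-3)$ reflection pairs; I checked this agrees with the explicit $n=6$ computation. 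So your proof is complete once these identities are written out. What the paper's version buys instead is that, being independent of any manipulation of the coordinates, it transfers verbatim to any function satisfying (Eq$_{n_0}$) for some $n_0$, which is exactly the observation exploited in the paper's concluding remarks; your version is shorter and needs only one predecessor, but is tied to the five-term relation through its product form.
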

On a vu que \rm{(Eq$_5$)} implique \rm{(Eq$_4$)}. Le cas $n=6$ étant un peu à
part car $6$ est ``trop petit'', on le traite ci-dessous. Pour $n=6$ on a  
\begin{multline*}
L(u_{1,3})+L(u_{2,4})+L(u_{3,5})+L(u_{4,6})+L(u_{1,5})+L(u_{2,6})+ \\
L(u_{1,4})+L(u_{2,5})+L(u_{3,6})=6L(1).
\end{multline*}
On note $E_6$ le membre de gauche de l'équation ci-dessus. En utilisant les
applications d'oubli $\m_{0,6}\lra \m_{0,5}$ pour $J=\{2\}$, $J=\{4\}$ et
$J=\{6\}$,  on montre ci-dessous que $E_6$ s'écrit comme une somme de termes de
type $E_5$ associés à l'équation
\rm{(Eq$_5$)}. Par exemple pour $J=\{2\}$, on a l'équation associée à
$\m_{0,5}$,
\[
\sum_{(\mb i,\mb j)\in \chi_5}L\left(u_{\mb i,\mb j}^{6,J}\right))=3L(1)
\] 
En composant chacune des fonctions ci-dessus avec $f_J:\m_{0,6}\lra \m_{0,5}$,
on obtient
\[
\sum_{(\mb i,\mb j)\in \chi_5}L\left(f_J^*(u_{\mb i,\mb j}^{6,J})\right))=3L(1).
\]
En utilisant l'équation \eqref{eq:fT*uij}, ceci se réécrit
\[
L(u_{3,5})+L(u_{4,6})+L(u_{1,5}u_{2,5})+ 
L(u_{1,4}u_{2,4})+L(u_{3,6})=3L(1).
\]
Visuellement le membre de droite se représente par le polygone associé à $f_J$
\[
\begin{tikzpicture}[baseline=(current bounding box.center)]
\foreach \p/\n/\pd/\nd in 
{
0/{(1,2)}/1/3,
1/3/3/4,
2/4/5/5,
3/5/7/6,
4/6/9/1
}
{ \coordinate (C\p) at (90-360/5*\p:1.5){};
  \node[intvertex] (P\p)  at (90-360/5*\p:1.5){};
  \node[mathsc] at (90-360/10*\pd:1.51) {z_{\nd}};
  \node[mathsc] at (90-360/5*\p:1.8) {\n};
 } 
\draw (C0)--(C1)--(C2)--(C3)--(C4)--cycle;
\draw[dashed] (C0)--(C3);
\end{tikzpicture} 
\]
étant entendu que l'on applique $L$ à l'image inverse des coordonnées données par les
cordes et que l'on somme chacun de ces termes.  Ces images inverses se lisent
directement sur le polygone ;  dans l'exemple
ci-dessus pour la corde $((1,2),5)$, on ``lit'' :
\[
f_J^*(u_{(1,2),5}^{6,J})=u_{1,5}u_{2,5}.
\]
On procède de même pour $J=\{4\}$ et $J=\{6\}$ obtenant ainsi trois équations
\rm(Eq$_5$), que l'on additionne pour obtenir
\begin{multline*}
E_6+L(u_{1,5}u_{2,5})+  L(u_{1,4}u_{2,4}) +L(u_{3,6}u_{4,6})+
L(u_{1,4}u_{1,3})+\\
L(u_{2,5}u_{2,6})+ L(u_{3,5}u_{3,6})= 9L(1).
\end{multline*}
Enfin les relations \eqref{eq:relcordes} pour $\m_{0,6}$ donnent 
\[
u_{1,4}u_{2,4}+u_{3,5}u_{3,6}=1,\quad u_{3,6}u_{4,6}+u_{1,5}u_{2,5}=1, 
\quad
u_{1,4}u_{1,3}+u_{2,5}u_{2,6}=1.
\]
On conclut alors le cas de \rm{Eq$_6$}, en utilisant la relation de
réflexion. Les relations \rm{Eq$_4$} et \rm{Eq$_5$} utilisées ici se résument par
les trois pentagones et les 
trois carrés ci-dessous (on a omis de noter les $z_i$) :
\[
 \begin{tikzpicture}[baseline=(current bounding box.center)]
 \foreach \p/\n in 
{
0/{(1,2)},
1/3,
2/4,
3/5,
4/6
 }
 { 
   \node[intvertex] (P\p)  at (90-360/5*\p:1.0){};
   \node[mathsc] at (90-360/5*\p:1.27) {\n};
  } 
 \draw (P0)--(P1)--(P2)--(P3)--(P4)--(P0);
 \end{tikzpicture} 
\quad
 \begin{tikzpicture}[baseline=(current bounding box.center)]
 \foreach \p/\n in 
{
0/1,
1/2,
2/{(3,4)},
3/5,
4/6
 }
 { 
   \node[intvertex] (P\p)  at (90-360/5*\p:1.0){};
   \node[mathsc] at (90-360/5*\p:1.27) {\n};
  } 
 \draw (P0)--(P1)--(P2)--(P3)--(P4)--(P0);
 \end{tikzpicture} 
\quad
\begin{tikzpicture}[baseline=(current bounding box.center)]
 \foreach \p/\n in 
{
0/1,
1/2,
2/3,
3/4,
4/{(5,6)}
 }
 { 
   \node[intvertex] (P\p)  at (90-360/5*\p:1.0){};
   \node[mathsc] at (90-360/5*\p:1.27) {\n};
  } 
 \draw (P0)--(P1)--(P2)--(P3)--(P4)--(P0);
 \end{tikzpicture} 
\]
\[
 \begin{tikzpicture}[baseline=(current bounding box.center)]
 \foreach \p/\n in 
{
0/{(1,2)},
1/3,
2/4,
3/{(5,6)}
 }
 { 
   \node[intvertex] (P\p)  at (45-360/4*\p:1.0){};
   \node[mathsc] at (45-360/4*\p:1.27) {\n};
  } 
 \draw (P0)--(P1)--(P2)--(P3)--(P0);
 \end{tikzpicture} 
\quad
 \begin{tikzpicture}[baseline=(current bounding box.center)]
 \foreach \p/\n in 
{
0/{(1,2)},
1/{(3,4)},
2/5,
3/6
 }
 { 
   \node[intvertex] (P\p)  at (45-360/4*\p:1.0){};
   \node[mathsc] at (45-360/4*\p:1.27) {\n};
  } 
 \draw (P0)--(P1)--(P2)--(P3)--(P0);
 \end{tikzpicture} 
\quad
\begin{tikzpicture}[baseline=(current bounding box.center)]
 \foreach \p/\n in 
{
0/1,
1/2,
2/{(3,4)},
3/{(5,6)}
 }
 { 
   \node[intvertex] (P\p)  at (45-360/4*\p:1.0){};
   \node[mathsc] at (45-360/4*\p:1.27) {\n};
  } 
 \draw (P0)--(P1)--(P2)--(P3)--(P0);
 \end{tikzpicture} 
\]

\begin{proof}[Preuve du Théorème \ref{thm:5->n}] On raisonne par récurrence en supposant que
  l'équation \rm{Eq$_5$} implique les relations \rm{Eq$_m$} pour $m<n$. Le cas
  initial de $n=6$ ayant déjà été traité, on suppose ici que $n\geqs 7$.

On suppose d'abord que $n=2k$ avec donc
  $k\geqs 4$. On note $K$ l'ensemble $\{1, \ldots , k\}$ et pour $J\subset K$
  on note $f_{2J} : \m_{0,n}\lra \m_{0,n-|J|}$ le morphisme oubliant les points
  marqués $z_{2j}$ pour $j$ dans $J$.

Pour $J=\{j_1,\ldots,j_l\}$, le polygone $\Gamma_n^{2J}$ associé à $\m_{0,n-|J|}$ et $f_{2J}$ se
représente par 
\[
\begin{tikzpicture}[baseline=(current bounding box.center)]
 \foreach \p/\n/\xs in 
{
0/1/0,
1/2/0,
2/{2j_1-2}/1,
3/{(2j_1-1,2j_1)}/{3.4},
4/{2j_1+1}/1,
5/{2j_2-2}/0,
6/{(2j_2-1,2j_2)}/0,
7/{2j_2+1}/0,
8/{2j_m-2}/{-2},
9/{(2j_m-1,2j_m)}/{-4},
10/{2j_m+1}/0
 }
 { 
   \node[intvertex] (P\p)  at (90-360/11*\p:2.3){};
   \node[mathsc,xshift=\xs ex] at (90-360/11*\p:2.6) {\n};
  } 
\draw (P0)--(P1);
\draw[dashed] (P1)--(P2);
\draw (P2)--(P3)--(P4);
\draw[dashed] (P4)--(P5);
\draw (P5)--(P6)--(P7);
\draw[dashed] (P7)--(P8);
\draw (P8)--(P9)--(P10);
\draw[dashed] (P10)--(P0);
 \end{tikzpicture} 
\]  
L'équation fonctionnelle \rm{Eq$_{n-|J|}$} est une conséquence de \rm{Eq$_5$} par
l'induction et induit par composition avec $f_{2J}$ une équation  sur $\m_{0,n}$
\[
\sum_{(\mb i,\mb j)\in \chi_{n-l}}L\left(f_{2J}^*(u_{\mb i,\mb j}^{n,2J})\right))=
\frac{(n-l-3)(n-l-2)}{2}L(1)
\]
avec $l=|J|$. On note $E_{2J}$ le membre de gauche de cette équation et
$E_n=E_{2\emptyset}$ le membre de gauche de  \rm{Eq$_{n}$}:
\[
\sum_{\{i,j\} \in \chi_n} L(u_{i,j})=\frac{(n-3)(n-2)}{2}L(1).
\]
Il nous suffit de montrer que 
\[
E=\sum_{l=0}^k(-1)^l\left(\sum_{\substack{J\subset K \\ |J|=l}}E_{2J} \right)=0.
\]
Cette équation montre que $E_n$ s'écrit comme la somme de termes de type $E_{2J}$
qui sont constants égaux à $\frac{(n-|J|-3)(n-|J|-2)}{2}L(1)$. On obtiendra
ainsi que $E_n$ est constant. La valeur de cette constante étant elle donnée par
spécialisation, on en déduit bien la relation   \rm{Eq$_{n}$}. 

On a déjà remarqué que $f_{2J}^*(u_{\mb i,\mb j}^{n,2J})$ ne dépend pas de $J$
tout en entier mais simplement de la paire $\{\mb i, \mb j\}$ et que
\[
u_{\mb i, \mb j}=f_{2J}^*(u_{\mb i,\mb j}^{n,2J})=\prod_{i \in \mb i, \, j\in
  \mb j}u_{i,j}.
\]  
On peut donc réécrire la somme $E$ en terme des cordes $\{ \mb i, \mb j\}$
possibles :
\[
E=\sum_{\mb i, \mb j }C_{\mb i, \mb j}L(u_{\mb i, \mb j}).
\]
On montre ci après par une étude combinatoire que $C_{\mb i, \mb j}=0$.

Les cordes $\{ \mb i, \mb j\}$ sont de trois types :
\begin{itemize}
\item $\mb i=(2i-1,2i)$ et $\mb j=(2j-1,2j)$ pour $i,j \in K$,
\item $\mb i=(i)=i$ et $\mb j=(2j-1,2j)$ pour $j \in K$ et $i\in \{1,\ldots,n\}$,
\item $\mb i=(i)=i$ et $\mb j=(j)=j$ pour $i,j\in \{1,\ldots,n\}$. 
\end{itemize} 
Pour chaque type, il faut ajouter certaines restrictions afin de ne
travailler qu'avec des cordes strictes. Pour le premier type de corde $\mb
i=(2i-1,2i)$ et $\mb j=(2j-1,2j)$,  $\{\mb i, \mb j\}$ est une corde stricte de
$\Gamma_n^{2J}$ si et seulement si $i,j \in J\subset K$ et $i\neq j-1,j,j+1$ modulo
$k$. La seconde condition caractérise l'existence d'une telle corde. Lorsqu'elle
est satisfaite,  il y a donc $\binom{k-2}{l-2}$ ensembles $J$ possibles avec
$|J|=l\geqs 2$. On trouve ainsi
\[
C_{(2i-1,2i), (2j-1,2j)}=\sum_{l=2}^k(-1)^l\binom{k-2}{l-2}=0.
\]  

Lorsque la corde $\{\mb i, \mb j\}=\{i,j\}$ est du troisième type, on a nécessairement
$l\leqs k-2$ et $i\neq j-1,j,j+1$. C'est une corde de $\Gamma_n^{2J}$ si et
seulement si
\[
\{\frac{i+1}{2}, \frac i 2 , \frac{j+1}{2}, \frac j 2 \}\cap J=\emptyset. 
\]
Deux des éléments de l'ensemble de gauche étant des demi-entiers, la condition
ne porte que sur deux éléments de $K$ qui ne doivent pas être dans $J$. On a donc 
\[
C_{i,j}=\sum_{l=0}^{k-2}(-1)^l\binom{k-2}{l}=0.
\]  
On raisonne de même dans le cas d'une corde de type ``intermédiaire'' $\{\mb i,\mb
  j\}=\{i,(2j-1,2j)\}$ pour obtenir 
\[
C_{i,(2j-1,2j)}=\sum_{l=1}^{k-1}(-1)^l\binom{k-2}{l-1}=0.
\]

Ceci conclut le cas $n=2k$ ($k\geqs 4$). Le cas où $n$ est impair $n=2k+1$
se traite par  la même étude combinatoire avec $K=\{1,\ldots,k\}$. Dans ce cas,
il y a en plus les cordes de type $\{\mb i, \mb j\}=\{2n+1,j\}$ et $\{\mb i, \mb
j\}=\{2n+1,(2j,2j-1)\}$ qui se traitent exactement comme les cordes du troisième
type et du type intermédiaire ci-dessus. 
\end{proof}
On présente ci-dessous les polygones $\Gamma_{n}^{2J}$ présents dans la preuve
pour $n=8$ puis $n=7$. %
\newcommand{\radius}{0.7}
\newcommand{\radiusb}{0.9}
\newcommand{\polyg}[3][90]{
\begin{tikzpicture}[baseline=(current bounding box.center)]
 \foreach \xs/\p/\pp/\n in 
{
#3
 }
 { %
   \coordinate (C\p)  at (#1-360/#2*\p:{\radius});
   \coordinate (C\pp)  at (#1-360/#2*\pp:{\radius});
   \node[intvertex] (P\p)  at (#1-360/#2*\p:{\radius}){};
   \node[mathss,xshift= \xs ex] at (#1-360/#2*\p:{\radiusb}) {\n};
   \draw (C\p)--(C\pp);
  } 
 \end{tikzpicture}
}
Le cas $n=8$, $K=\{1,2,3,4 \}$ :
\begin{gather*}
+\left(
\polyg{8}{
0/0/1/1,
0/1/2/2,
0/2/3/3,
0/3/4/4,
0/4/5/5,
0/5/6/{6},
0/6/7/7,
0/7/0/8
}
\right)
\tag{$|J|=0$}
\\
-\left(
\polyg{7}{
0/0/1/{(1,2)},
0/1/2/3,
0/2/3/4,
0/3/4/5,
0/4/5/6,
0/5/6/{7},
0/6/0/8
}
\polyg{7}{
0/0/1/1,
0/1/2/2,
1/2/3/{(3,4)},
0/3/4/5,
0/4/5/6,
0/5/6/{7},
0/6/0/8
}
\polyg{7}{
0/0/1/1,
0/1/2/2,
0/2/3/3,
0/3/4/4,
0/4/5/{(5,6)},
0/5/6/{7},
0/6/0/8
}
\polyg{7}{
0/0/1/1,
0/1/2/2,
0/2/3/3,
0/3/4/4,
0/4/5/5,
0/5/6/{6},
{-0.65}/6/0/{(7,8)}
}
\right)
\tag{$|J|=1$}
\\
+\left(
\begin{split}
\polyg{6}{
0/0/1/{(1,2)},
1/1/2/{(3,4)},
0/2/3/5,
0/3/4/6,
0/4/5/7,
0/5/0/{8}
}
\polyg{6}{
0/0/1/{(1,2)},
0/1/2/3,
0/2/3/4,
0/3/4/{(5,6)},
0/4/5/7,
0/5/0/{8}
}&
\polyg{6}{
0/0/1/{(1,2)},
0/1/2/3,
0/2/3/4,
0/3/4/5,
0/4/5/6,
{-1}/5/0/{(7,8)}
}
\polyg{6}{
0/0/1/1,
0/1/2/2,
1/2/3/{(3,4)},
0/3/4/{(5,6)},
0/4/5/7,
0/5/0/{8}
}
\\
\polyg{6}{
0/0/1/1,
0/1/2/2,
1/2/3/{(3,4)},
0/3/4/5,
0/4/5/6,
{-1}/5/0/{(7,8)}
}
&
\polyg{6}{
0/0/1/1,
0/1/2/2,
0/2/3/3,
0/3/4/4,
{-1}/4/5/{(5,6)},
{-1}/5/0/{(7,8)}
}
\end{split}
\right)
\tag{$|J|=2$}
\\
-\left(
\polyg{5}{
0/0/1/{(1,2)},
1/1/2/{(3,4)},
0/2/3/{(5,6)},
0/3/4/7,
0/4/0/8
}
\polyg{5}{
0/0/1/{(1,2)},
1/1/2/{(3,4)},
0/2/3/5,
0/3/4/6,
{-1}/4/0/{(7,8)}
}
\polyg{5}{
0/0/1/{(1,2)},
0/1/2/2,
0/2/3/3,
0/3/4/{(4,6)},
{-1}/4/0/{(7,8)}
}
\polyg{5}{
0/0/1/1,
0/1/2/2,
0/2/3/{(3,4)},
0/3/4/{(5,6)},
{-1}/4/0/{(7,8)}
}
\right)
\tag{$|J|=3$}
\\
+\left(
\polyg{4}{
0/0/1/{(1,2)},
1/1/2/{(3,4)},
0/2/3/{(5,6)},
{-1}/3/0/{(7,8)}
}
\right)
\tag{$|J|=4$}
\end{gather*}
Le cas $n=7$, $K=\{1,2,3\}$ :
\begin{gather*}
+\left(
\polyg{7}{
0/0/1/1,
0/1/2/2,
0/2/3/3,
0/3/4/4,
0/4/5/5,
0/5/6/{6},
0/6/0/7
}
\right)
\tag{$|J|=0$}
\\
-\left(
\polyg{6}{
0/0/1/{(1,2)},
0/1/2/3,
0/2/3/4,
0/3/4/5,
0/4/5/6,
0/5/0/{7}
}
\polyg{6}{
0/0/1/1,
0/1/2/2,
1/2/3/{(3,4)},
0/3/4/5,
0/4/5/6,
0/5/0/{7}
}
\polyg{6}{
0/0/1/1,
0/1/2/2,
0/2/3/3,
0/3/4/4,
{-1}/4/5/{(5,6)},
0/5/0/{7}
}
%
\right)
\tag{$|J|=1$}
\\
+\left(
\polyg{5}{
0/0/1/{(1,2)},
1/1/2/{(3,4)},
0/2/3/5,
0/3/4/6,
0/4/0/7
}
\polyg{5}{
0/0/1/{(1,2)},
0/1/2/3,
0/2/3/4,
0/3/4/{(5,6)},
0/4/0/7
}
\polyg{5}{
0/0/1/1,
0/1/2/2,
0/2/3/{(3,4)},
0/3/4/{(5,6)},
0/4/0/7
}
\right)
\tag{$|J|=2$}\\
-\left(
\polyg{4}{
0/0/1/{(1,2)},
1/1/2/{(3,4)},
0/2/3/{(5,6)},
0/3/0/{7}
}
\right)
\tag{$|J|=3$}
\end{gather*}


\section{Quelques remarques d'ouverture}
\`A la connaissance de l'auteur c'est la première fois qu'une famille infinie
d'équations fonctionnelles pour le dilogarithme de Rogers en un nombre croissant de
variables est réduit à la relation à $5$ termes ; l'équation \eqref{eq:Eqn}
étant une équation en $n-3$ variables. Ce travail renforce ainsi l'idée que
l'équation à $5$ termes (avec la relation d'inversion liant $L(x)$ et $L(1/x)$)
est universelle pour le 
dilogarithme : toute équation fonctionnelle du dilogarithme en des fonctions
rationnelles d'un 
nombre quelconque de variables s'y réduit. Le cas d'une variable est lui bien
connu \cite{WotjBSPFE,ZagierRemLi2}.

On remarque par ailleurs que la preuve du Théorème \ref{thm:5->n}, montrant que
la relation à $5$ termes implique les équations \eqref{eq:Eqn}, repose
uniquement sur la combinatoire des espaces de modules de courbes
$\m_{0,n}$. Ainsi, %
si une fonction $F$ satisfait  \eqref{eq:Eqn} pour un certain $n_0\geqs 6$, on
peut espérer montrer par la même méthode que la même fonction $F$ satisfait
\eqref{eq:Eqn} pour tout $n$ assez grand ($n/2\geqs n_0$) après avoir traité un
nombre fini de cas particuliers.
  
En guise de conclusion, on remarque que le travail ci-dessus est aussi valide
dans le cadre du groupe de Bloch $B_2(\F)$ où $\F$ est un corps de nombres. Très
brièvement, le groupe de Bloch $B_2(\F)$ peut se définir (voir par
ex. \cite[Chap.1 \S 4]{ZagierLi2} ou \cite{SuslinICM86,SuslinKthBG}) comme le quotient de 
\[
\ker\left( 
\begin{tikzpicture}[baseline=(current bounding box.center)]
\matrix (m) [matrix of math nodes, row sep=0.5em,
column sep=3.5em, text height=1.5ex, text depth=0.25ex]
{ \Z[\F^*] &  \Lambda^2_{\Z}(\F^*)\otimes_{\Z}\Q \\
{[x]} & {[x] \w [1-x]}  
\\ };
\path[->]
(m-1-1) edge node {} (m-1-2);
\path[|->]
(m-2-1) edge node {} (m-2-2);
\end{tikzpicture}
\right)
\]
par le sous-groupe $Q_2$ engendré par 
\[
[x]+[\frac 1 x ], \quad [x]+[1-x], \quad
[x]+[\frac{1-x}{1-xy}]+[\frac{1-y}{1-xy}]+[y]+[1-xy].
\]
La preuve du Théorème \ref{thm:eqfctn} montre que les combinaisons linéaires de
l'équation \eqref{eq:Eqnf},   qui traduisent les coordonnées diédrales en termes
de fonction rationnelles,
\[
\sum_{\{i,j\} \in \chi_n} [f_{i,j}(u_{1,3},\ldots, u_{1,n-1})]
\]
définissent des éléments du groupe de Bloch $B_2(\F)$. On montrerait de même des
équations fonctionnelles \eqref{eq:Eqn} associées au dilogarithme de Bloch-Wigner. Le
Théorème \ref{thm:5->n} montre que ces éléments sont déjà dans $Q_2$.

Enfin, toujours en relation avec la $K$-théorie des corps de nombres, un
traitement similaire dans le cadre des cycles algébriques et des groupes de
Chow est possible. Le contrôle de l'admissibilité des cycles ainsi que le
comportement de ceux-ci vis à vis des applications d'oubli de points marqués
$\m_{0,n}\lra \m_{0,k}$ nécessite cependant quelques précautions. On développera
cette approche dans un travail ultérieur qui s'intéressera par ailleurs à
prouver d'autres relations classiques des polylogarithmes purement en terme de
cycles algébriques, ceci en s'appuyant sur les constructions 
explicites de \cite{SouMPCC}.

\bibliographystyle{smfalpha}
\bibliography{eqfunctLi2}
\end{document}